\def\le{\leqslant}
\def\ge{\geqslant}
\theoremstyle{plain}
\newtheorem{theorem}{Theorem}
\newtheorem{proposition}{Proposition}[section]
\newtheorem{lemma}[proposition]{Lemma}
\theoremstyle{remark}
\numberwithin{equation}{section}
\begin{document}

\vglue -5mm

\title[On a sum involving certain arithmetic functions and the integral part function]
{On a sum involving certain arithmetic functions and the integral part function}
\author{Jing Ma \& Huayan Sun}

\address{%
Jing Ma
\\
School of Mathematics
\\
Jilin University
\\
Changchun 130012
\\
P. R. China
}
\email{jma@jlu.edu.cn}

\address{%
Huayan Sun
\\
School of Mathematics
\\
Jilin University
\\
Changchun 130012
\\
P. R. China
}
\email{hysun19@mails.jlu.edu.cn}

\date{\today}

\begin{abstract}
In this short, we study sums of the shape $\sum_{n\leqslant x}{f([x/n])}/{[x/n]},$  where $f$ is Euler totient function $\varphi$, Dedekind function $\Psi$,
 sum-of-divisors function $\sigma$
or the alternating sum-of-divisors  function $\beta.$
We improve previous results when $f=\varphi$ and derive new estimates when $f=\Psi, f=\sigma$ and $f=\beta.$
\end{abstract}

\keywords{ Euler function, Asymptotic formula, Multiple exponential sums}

\maketitle

\section{{Introduction}}

Let $x>10$ be a  real number and $n$ be a positive integer. The  integer part of $x/n$, namely $[x/n]$, is very important in analytic  number theory. For
example, the latest result for the Dirichlet divisor problem reads (see, Bourgain \cite{Bourgain})
\begin{equation*}
\sum_{n\leqslant x}\left[\frac{x}{n}\right]=x\log x+(2\gamma-1)x+O(x^{{517}/{1648}+o(1)}),
\end{equation*}
where $\gamma$ is the Euler constant.

Let $f$ be any  arithmetic function. It is interesting to study the asymptotic behaviour of the sum $$
S_{f}(x):=\sum_{n\leqslant x}f\left(\left[\frac{x}{n}\right]\right).
$$
This question  was first studied in \cite{BDHPS2019}, where a series of results were given when $f$ satisfies different conditions.
Wu \cite{Wu2020note} and Zhai \cite{Zhai2020} improved their results independently.
Several authors  studied the asymptotic formulas  for $S_{f}$ when $f$ equals some special arithmetic functions such as
\begin{equation*}
\begin{split}
& \tau(n):=\text{the divisor function,}\\
&\sigma(n):=\text{the sum-of-divisors function,}\\
&\beta (n):=\text{the alternating sum-of-divisors  function,}\\
&\omega(n):=\text{the number of distinct prime factors of $n$},\\
&\Omega(n):=\text{the number of  prime factors of $n$},\\
&\Lambda(n):=\text{the von Mangoldt function,}\\
&\varphi(n):=\text{the Euler totient function},\\
&\Psi(n):=\text{the Dedekind function},
\end{split}
\end{equation*}
etcetera.
(see \cite{O.Bordelles2020},\cite{BDHPS2019},\cite{LWY-2+},\cite{MaWuzhao},\cite{MaWu},\cite{MaSun}, for instance).
With the help of Vaughan identity and the technique of one-dimensional exponential sum, Ma and Wu \cite{MaWu}  proved
\begin{equation}\label{mangoldt}
S_{\Lambda}(x)
= x\sum_{m\ge 1}\frac{\Lambda(m)}{m(m+1)} + O_{\varepsilon}\big(x^{35/71+\varepsilon}\big).
\end{equation}
Using similar idea to the classical divisor function $\tau(n)$, Ma and Sun \cite{MaSun} showed that
\begin{equation}\label{d(n)}
S_{d}(x)
= x\sum_{m\ge 1}\frac{\ \tau(m)}{m(m+1)} + O_{\varepsilon}\big(x^{11/23+\varepsilon}\big).
\end{equation}
Subsequently, by using a result of Baker on 2-dimensional  exponential sums \cite[Theorem 6] {O.Bordelles2020},   Bordell\`es \cite{O.Bordelles2020}
sharpened the exponents in \eqref{mangoldt}-\eqref{d(n)} and also studied some new examples.
Very recently, Liu, Wu and Yang  used the multiple exponential sums to derive better results than \cite{O.Bordelles2020} (see \cite{LWY-1,LWY-2+}).

When $f(n)=\varphi(n)/n$, Bordell\`es-Dai-Heyman-Pan-Shparlinski \cite [Corollary 2.4] {BDHPS2019} derived that
$$
\sum_{n\leqslant x}\frac{\varphi([x/n])}{[x/n]}=x\sum_{m\ge 1}\frac{\varphi(m)}{m^{2}(m+1)}+O(x^{1/2}).
$$
Subsequently, Wu \cite{Wu2020note} refined arguments of \cite{BDHPS2019} and proved
\begin{equation}\label{1/3}
\sum_{n\leqslant x}\frac{\varphi([x/n])}{[x/n]}=x\sum_{m\ge 1}\frac{\varphi(m)}{m^{2}(m+1)}+O(x^{1/3}\log x).
\end{equation}
Stucky \cite{Stucky} further improved the result of $S_{\tau}$ and generalized Wu{'s} result in \cite{Wu2020note}.
He proved if $f(n)=\sum_{d|n}g(d)$ and $\sum_{d\leqslant x}|g(d)|\ll x^{\alpha}(\log x)^{\theta}$ for some $\alpha\in [0,1)$ and $\theta\geqslant 0$, then
\begin{equation}\label{stucky}
S_{f}(x)=\sum_{m\geqslant 1}\frac{f(m)}{m(m+1)}+O\left(x^{\frac{1+\alpha}{3-\alpha}}(\log x)^{\theta}\right),
\end{equation}
where the implied constant depends only on $\alpha$. (If $\alpha=0$, then $\theta$ should be replaced by $\max\left(1,\theta\right)$).
It is well known that
\begin{equation*}
\frac{\varphi(n)}{n}=\sum_{d|n}\frac{\mu(d)}{d}\qquad \text{and}\qquad
\sum_{d\leqslant x}\left|\frac{\mu(d)}{d}\right|\ll x^{\varepsilon},
\end{equation*}
 then applying \eqref{stucky}, we get
\begin{equation}\label{(1+x)/(3-x)}
\sum_{n\leqslant x}\frac{\varphi([x/n])}{[x/n]}=\sum_{m\geqslant 1}\frac{\varphi(m)}{m^{2}(m+1)}+O(x^{{1}/{3}+\varepsilon}),
\end{equation}

Motivated by \cite{LWY-1,LWY-2+},
we would like to improve \eqref{(1+x)/(3-x)}  by using three-dimensional exponential sums.
Noticing that four functions $\sigma, \varphi, \beta$ and $\Psi$ share many similarities such as multiplicative structures
and rates of growth (see \cite{MaWuzhao}), we can get a more general result.

\begin{theorem}\label{Thm1}
Let $f\in \{\sigma,\varphi,\beta,\Psi\}$.
For any $\varepsilon>0$, we have
\begin{equation*}
\sum_{n\leqslant x}\frac{f([x/n])}{[x/n]}
= x\sum_{m\ge 1}\frac{f(m)}{m^{2}(m+1)} + O\big(x^{3/13+\varepsilon}\big)
\end{equation*}
as $x\rightarrow \infty$.
\end{theorem}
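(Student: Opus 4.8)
The plan is to handle all four functions at once via the common convolution identity
\begin{equation*}
\frac{f(n)}{n}=\sum_{d\mid n}\frac{g(d)}{d},\qquad\text{where }g=\mu,\ 1,\ \mu^{2},\ \lambda\ \text{ for }\ f=\varphi,\ \sigma,\ \Psi,\ \beta\ \text{ respectively}
\end{equation*}
(here $\lambda$ is the Liouville function). In each case $|g(d)|\le1$, so $\sum_{d\le t}|g(d)|/d\ll\log t$; this is the only arithmetic property of $f$ that enters, and it places us exactly in the same situation as \eqref{(1+x)/(3-x)} (and more generally \eqref{stucky} with ``$\alpha=0$''), which is what we want to improve.

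First I would interchange the order of summation and parametrise the divisibility condition. Writing $f([x/n])/[x/n]=\sum_{d\mid[x/n]}g(d)/d$ and summing over $n$ gives
\begin{equation*}
\sum_{n\le x}\frac{f([x/n])}{[x/n]}=\sum_{d\ge1}\frac{g(d)}{d}\,\#\big\{n\le x:\ d\mid[x/n]\big\},
\end{equation*}
and since $[x/n]=d\ell$ is equivalent to $x/(d\ell+1)<n\le x/(d\ell)$, the inner count equals $\sum_{\ell\le x/d}\big(\lfloor x/(d\ell)\rfloor-\lfloor x/(d\ell+1)\rfloor\big)$. Substituting $\lfloor t\rfloor=t-\tfrac12-\psi(t)$ (with $\psi$ the sawtooth function; the $\tfrac12$'s cancel), the smooth contribution is $x\sum_{d,\ell\ge1}\frac{g(d)}{d}\cdot\frac1{d\ell(d\ell+1)}=x\sum_{m\ge1}\frac{f(m)}{m^{2}(m+1)}$ up to $O(x^{\varepsilon})$ — put $m=d\ell$ and use the convolution identity, the error from extending the sums over $d$ and $\ell$ being $O(1)$ — and there remains the error term
\begin{equation*}
\mathcal{E}=\sum_{d\le x}\frac{g(d)}{d}\sum_{\ell\le x/d}\left(\psi\!\left(\frac{x}{d\ell+1}\right)-\psi\!\left(\frac{x}{d\ell}\right)\right),
\end{equation*}
so that everything comes down to proving $\mathcal{E}\ll x^{3/13+\varepsilon}$.

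To estimate $\mathcal{E}$ I would split the ranges of $d$ and $\ell$ into dyadic blocks and insert Vaaler's approximation $\psi(t)=\sum_{1\le|h|\le H}c_{h}\,e(ht)+O\big(\Delta_{H}(t)\big)$, where $e(t)=e^{2\pi it}$, $|c_{h}|\ll1/|h|$, and $\Delta_{H}\ge0$ is a Fej\'er-type kernel with a Fourier expansion of length $\le H$; the truncation level $H$ is a power of $x$ to be fixed at the end. Using the Fourier expansion of $\Delta_{H}$ as well, both the main part and the Fej\'er-error part of $\mathcal{E}$ reduce — after partial summation to strip the weights $c_{h}$, $1/d$ and the coupling $\ell\le x/d$ — to bounding triple exponential sums of the shape
\begin{equation*}
\sum_{h\sim H'}\ \sum_{d\sim D}\ \sum_{\ell\sim L}\alpha_{h}\beta_{d}\,e\!\left(\frac{hx}{d\ell}\right)\qquad\big(DL\ll x,\ H'\le H\big)
\end{equation*}
together with the analogous sums having $d\ell+1$ instead of $d\ell$, where $\alpha_{h},\beta_{d}$ are bounded coefficients. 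The decisive point is that $\beta_{d}=g(d)$ cannot be taken out of the sum, since for $f=\sigma$ this coefficient has no cancellation, so all of the saving must be extracted from the oscillation of $e(hx/(d\ell))$ in the three coupled variables. Here I would invoke the multidimensional, and in particular three-dimensional, exponential-sum estimates of Liu--Wu--Yang, together with classical van der Corput estimates and their two-dimensional refinements in the degenerate ranges where one of $D,L,H'$ is short, and then optimise $H$ against the resulting bounds.

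The main obstacle is precisely this optimisation: one needs the three-dimensional estimates to yield enough cancellation to beat the exponent $1/3$ coming from one-dimensional methods and to push it all the way down to $3/13$, while simultaneously verifying that neither the boundary regimes (one of $d,\ell,h$ short) nor the non-multiplicative phase $hx/(d\ell+1)$ — which must be treated by the same analytic estimates rather than by any arithmetic manipulation — dominates. The exponent $3/13$ should emerge as the optimum of the system of constraints on $H$ and the dyadic parameters; the convolution identity, the interchange of summation, the main-term extraction and the tail bounds are all routine.
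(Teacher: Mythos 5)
Your toolkit is the same as the paper's (the four convolution identities with $g=\mu,1,\mu^{2},\lambda$, Vaaler's approximation, and the Liu--Wu--Yang three-dimensional exponential sum bound), but there is a genuine gap in your reduction: you push the \emph{entire} range $d\ell\le x$ into the exponential-sum machinery, and that cannot work. For $m=d\ell$ near $x$ the phase $hx/(m+\delta)$ varies by $O(hx/m^{2})$ over unit steps, so for small $h$ there is essentially no oscillation; this is visible in \eqref{S_delta-z}, whose term $X^{-1/2}HMN$ contributes (after normalisation) about $Dx^{-1/2}$ on a block with $d\ell\asymp D$, which already exceeds the target $x^{3/13}$ as soon as $D>x^{19/26}$, and it is exactly why the key estimate \eqref{key estimate} is only proved for $D\le x^{8/11}$. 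The problematic regime is not one of your ``degenerate ranges where one of $D,L,H'$ is short'': one can have $d\sim\ell\sim x^{1/2}$, so neither variable is short, yet $\sum_{d,\ell}e\left(hx/(d\ell)\right)$ is of essentially trivial size there. Consequently no optimisation of $H$, exponent pairs, or dyadic parameters over the full range $DL\ll x$ can produce $\mathcal{E}\ll x^{3/13+\varepsilon}$; the exponent $3/13$ does not ``emerge'' from such an optimisation (in the paper $H$ is simply taken to be $D^{1/2}$).

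The missing idea is the truncation the paper performs \emph{before} applying Vaaler: the contribution of $n\le N_{f}$ (equivalently $m=[x/n]\gtrsim x/N_{f}$) is bounded trivially by $N_{f}x^{\varepsilon}$ by counting the at most $N_{f}$ values of $n$, and only the range $N_{f}<m\le x/N_{f}$ is expanded with $\psi$ and estimated via \eqref{key estimate}, giving $(x^{3}N_{f}^{-1})^{1/12}x^{\varepsilon}$; the exponent $3/13$ is precisely the balance $N_{f}=(x^{3}N_{f}^{-1})^{1/12}$. In your formulation the same repair is available because you kept the difference structure: for $m\sim D$ with $D$ large, $\sum_{m\sim D}\frac{f(m)}{m}\left(\psi\left(\frac{x}{m+1}\right)-\psi\left(\frac{x}{m}\right)\right)\ll x^{\varepsilon}(x/D+1)$, since the $\psi$-jumps count integers lying in disjoint intervals of total length $O(x/D)$; balancing $x/D_{0}$ against $(x^{2}D_{0})^{1/12}$ then reproduces $x^{3/13}$. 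But this non-oscillatory treatment of the large-$m$ tail is absent from your proposal, and without it the plan fails. (If you do add it, note that the balance point is $D_{0}=x^{10/13}$ while \eqref{key estimate} is stated only for $D\le x^{8/11}$, so the admissible range of $D$ --- equivalently the constraint on $N_{f}$ versus the choice $N_{f}=x^{3/13}$ --- needs explicit attention in any complete write-up.)
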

We note that $\sum_{m\geqslant 1}\frac{\varphi(m)}{m^{2}(m+1)}\approx 0.78838 $.
To compare with \eqref{(1+x)/(3-x)}, we have $\frac{1}{3}\approx 0.33$ and $\frac{3}{13}\approx 0.23$.

\vskip 5mm

\section{Some Lemmas}

In this section, we will  cite two lemmas which will be needed in the next section.
The first one is due to Valaler (\cite [Theorem A.6] {GrahamKolesnik1991}).
\begin{lemma}\label{lem:1}
Let $\psi(t):=\{t\}-\frac{1}{2}$, where $\{t\}$ means the fractional part of real number $t$.
For~$x\geqslant{}1$ and ~$H\geqslant{}1$, we have
$$\psi(x)=-\sum_{1\leqslant{} |h|\leqslant{} H} \Phi\left(\frac{h}{H+1}\right)\frac{e(hx)}{2\pi ih}+R_H(x),$$where~$e(t):=e^{2\pi it},\Phi(t):=\pi t(1-|t|)\cos(\pi t)+|t|$, and the error term ~$R_H(x)$ satisfies
\begin{equation}\label{RH}
|R_H(x)|\leqslant{}\frac{1}{2H+2}\sum_{|h|\leqslant{} H}\left(1-\frac{|h|}{H+1}\right)e(hx).
\end{equation}
\end{lemma}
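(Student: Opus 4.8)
The assertion is Vaaler's trigonometric approximation to the sawtooth function, and I would prove it through the Beurling--Selberg--Vaaler extremal-function method. The starting point is the (only conditionally convergent) Fourier expansion $\psi(x)=-\sum_{h\neq0}e(hx)/(2\pi i h)$. A direct truncation at $|h|\le H$, or even a plain Fej\'er smoothing (which would replace the weight $\Phi(h/(H+1))$ by the linear factor $1-|h|/(H+1)$), fails to yield the clean pointwise bound \eqref{RH}: the tail $\sum_{|h|>H}e(hx)/(2\pi i h)$ cannot be controlled termwise by a nonnegative kernel. The role of the specific weight $\Phi(t)=\pi t(1-|t|)\cos(\pi t)+|t|$ is precisely to repair this defect, and it is forced by an extremal construction rather than guessed.

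Concretely, I would first recall Beurling's entire function $B(z)$ of exponential type $2\pi$ that majorizes $\operatorname{sgn}(x)$ on $\mathbb{R}$ and minimizes $\int_{\mathbb{R}}\big(B(x)-\operatorname{sgn}(x)\big)\,dx=1$; its reflection $-B(-z)$ furnishes the extremal minorant. Following Vaaler, these are assembled (by interpolation at the integers, or equivalently by Poisson summation after rescaling by $H+1$) into a pair of real trigonometric polynomials $\psi^{+}(x)$ and $\psi^{-}(x)$ of degree at most $H$ that sandwich the periodic sawtooth, $\psi^{-}(x)\le\psi(x)\le\psi^{+}(x)$ for all $x$ (at integers one uses the usual midpoint convention for the jump), and that are extremal in the sense that $\int_0^1\big(\psi^{+}-\psi^{-}\big)\,dx=1/(H+1)$ is as small as possible in this degree. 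Computing the Fourier coefficients of $\psi^{\pm}$ explicitly from the interpolation data is the point at which the cosine term in $\Phi$ emerges; setting $\psi^{*}:=\tfrac12(\psi^{+}+\psi^{-})$, one finds its $h$-th Fourier coefficient to be exactly $-\Phi(h/(H+1))/(2\pi i h)$ for $0<|h|\le H$ and $0$ for $h=0$, which is precisely the trigonometric polynomial on the right-hand side of the claimed identity.

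It then remains to estimate $R_H:=\psi-\psi^{*}$. From the sandwich one gets the pointwise inequality $|R_H(x)|=|\psi(x)-\psi^{*}(x)|\le\tfrac12\big(\psi^{+}(x)-\psi^{-}(x)\big)$. The difference $\psi^{+}-\psi^{-}$ is a nonnegative trigonometric polynomial of degree $\le H$ with total mass $1/(H+1)$, and one checks that it equals exactly $\frac{1}{H+1}$ times the Fej\'er kernel $\sum_{|h|\le H}(1-|h|/(H+1))\,e(hx)$ (nonnegative, of mean $1$), a feature inherited from $B(z)-B(-z)$ being twice the Fej\'er kernel on the line. This yields $|R_H(x)|\le\frac{1}{2(H+1)}\sum_{|h|\le H}(1-|h|/(H+1))\,e(hx)$, which is exactly \eqref{RH}; note that the right-hand side is genuinely nonnegative, so the absolute-value bound is meaningful.

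The main obstacle is the extremal construction itself: producing the majorant and minorant $\psi^{\pm}$ with the exact extremal mass $1/(H+1)$ and, above all, computing their Fourier coefficients finely enough to recover the precise weight $\Phi$, the delicate point being the $\pi t(1-|t|)\cos(\pi t)$ contribution that distinguishes Vaaler's polynomial from a crude Fej\'er average. This is the technical heart of Vaaler's theorem. Since it is classical and recorded as \cite[Theorem A.6]{GrahamKolesnik1991}, in the paper we simply invoke it, but the route sketched above is how one would establish it from first principles.
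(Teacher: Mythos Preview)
Your proposal is correct and in fact goes well beyond what the paper does: the paper provides no proof of this lemma at all, but simply attributes it to Vaaler and cites \cite[Theorem~A.6]{GrahamKolesnik1991}. Your sketch of the Beurling--Selberg--Vaaler extremal-function construction, with $\psi^{*}=\tfrac12(\psi^{+}+\psi^{-})$ producing the weight $\Phi$ and $\tfrac12(\psi^{+}-\psi^{-})$ equalling $(H+1)^{-1}$ times the Fej\'er kernel, is the standard route to this result and is accurate as outlined.
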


The second one is \cite [Proposition  3.1] {LWY-1}.

\begin{lemma}\label{lem:2}
Let $\alpha>0,\beta>0,\gamma>0$ and $\delta\in \mathbb{R}$ be some constants. For $X>0,H\geqslant 1,M\geqslant 1,$ and $N\geqslant 1,$ define
\begin{equation}\label{S_delta}
S_{\delta}=S_{\delta}(H,M,N):=\sum_{h\sim H}\sum_{m\sim M}\sum_{n\sim N}a_{h,n}b_{m}e\left(X\frac{M^{\beta }N^{\gamma}}{H^{\alpha}}\frac{h^{\alpha}}{m^{\beta}n^{\gamma}+\delta}\right),
\end{equation}
where $e(t):=e^{2\pi i t}$, the $a_{h,n}$ and $b_{m}$ are complex numbers such that $|a_{h,n}|\leqslant 1, b_{m}\leqslant 1$ and
$m\sim M$ means that $M<m\leqslant 2M$. For any $\varepsilon >0$ we have
\begin{equation}\label{S_delta-z}
S_{\delta}\ll \big((X^{\kappa}H^{2+\kappa}M^{1+\kappa+\lambda}N^{2+\kappa})^{1/(2+2\kappa)}+HM^{1/2}N+H^{1/2}MN^{1/2}+X^{-1/2}HMN\big)X^{\varepsilon}
\end{equation}
uniformly for $M\geqslant 1,N\geqslant 1,H\leqslant N^{\gamma-1}M^{\beta}$ and $0\leqslant \delta\leqslant 1/\varepsilon$, where $(\kappa,\lambda)$ is an exponent pair and the implied constants depends on $(\alpha,\beta,\gamma,\varepsilon)$ only.
\end{lemma}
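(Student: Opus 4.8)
The plan is to treat $S_\delta$ as a three-dimensional oscillatory sum whose phase $\Phi(h,m,n):=X\,\frac{M^\beta N^\gamma}{H^\alpha}\,\frac{h^\alpha}{m^\beta n^\gamma+\delta}$ has size $\asymp X$ throughout the box $h\sim H,\ m\sim M,\ n\sim N$, and to extract cancellation by van der Corput's method governed by the exponent pair $(\kappa,\lambda)$. First I would record the analytic input. Under the hypotheses $0\le\delta\le1/\varepsilon$ and $M,N\ge1$ one has $m^\beta n^\gamma+\delta\asymp M^\beta N^\gamma$, so all partial derivatives of $\Phi$ have the expected orders, namely $\partial_h\Phi\asymp X/H$, $\partial_m\Phi\asymp X/M$, $\partial_n\Phi\asymp X/N$, and the higher and mixed derivatives factor according to the monomial shape of $\Phi$. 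The side condition $H\le N^{\gamma-1}M^\beta$ will be used precisely to keep the relevant derivative in the admissible range, so that the exponent pair may legitimately be applied after the reduction below.

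The chief structural obstacle is the presence of the arbitrary coefficients: $a_{h,n}$ couples $h$ and $n$, while $b_m$ sits on $m$, so no variable is immediately free for an exponent pair. Since the shape of the main term forces the exponent pair to act on the $m$-sum (see the next paragraph), I would apply Cauchy--Schwarz so as to liberate $m$, bounding $|S_\delta|^2\le\bigl(\sum_m|b_m|^2\bigr)\sum_m\bigl|\sum_{h,n}a_{h,n}e(\Phi)\bigr|^2$. Opening the square reduces matters to a coefficient-free sum $\sum_m e\bigl(\Phi(h_1,m,n_1)-\Phi(h_2,m,n_2)\bigr)$ weighted by $a_{h_1,n_1}\overline{a_{h_2,n_2}}$, whose modulus is harmlessly $\le1$, so that the inner $m$-sum now carries no arbitrary weights. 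The diagonal contributions of this manoeuvre, together with the secondary diagonal that appears in the subsequent differencing of the $h$- and $n$-variables, account for the two ``trivial'' summands $H^{1/2}MN^{1/2}$ and $HM^{1/2}N$ in \eqref{S_delta-z}; the symmetry $H\leftrightarrow N$ visible in the main term reflects that $h$ and $n$ are differenced on the same footing while $m$ plays the distinguished role.

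I would then isolate the $m$-variable for the exponent pair, a choice the principal term dictates: the exponent $1+\kappa+\lambda$ on $M$ is the only one carrying the second index $\lambda$, which identifies $M$ as the length of the sum to which $(\kappa,\lambda)$ is applied, whereas $h$ and $n$, whose exponents $2+\kappa$ involve only $\kappa$, are reduced by Cauchy--Schwarz and van der Corput differencing. Concretely, the freed $m$-sum has phase $\propto(m^\beta n^\gamma+\delta)^{-1}$, to leading order a pure power of $m$, so that $\sum_m e(\cdots)\ll F^\kappa M^{\lambda-\kappa}$ with $F$ the size of the differenced phase; summing this bound over the differences of the remaining variables and choosing the differencing length optimally produces the principal term $(X^\kappa H^{2+\kappa}M^{1+\kappa+\lambda}N^{2+\kappa})^{1/(2+2\kappa)}$, the denominator $2(1+\kappa)$ arising from the square root in Cauchy--Schwarz together with the optimisation of the shift against the exponent-pair saving. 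The complementary regime, in which the differenced phase is too small to oscillate (the first-derivative test, equivalently the constant term in the exponent-pair inequality), is estimated trivially and contributes the final summand $X^{-1/2}HMN$.

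The step I expect to be the main obstacle is securing genuine cancellation in more than one variable at once. A single exponent-pair estimate in $m$ with $h$ and $n$ summed trivially is far too lossy — it would leave full powers $H^1N^1$ rather than the required $H^{(2+\kappa)/(2+2\kappa)}N^{(2+\kappa)/(2+2\kappa)}$ — so the reduction must be arranged, by choosing which variables to smooth by Cauchy--Schwarz and how long a shift to difference, so that oscillation is exploited in the $h$- and $n$-directions simultaneously while $m$ is processed by $(\kappa,\lambda)$. Making the bookkeeping of these competing exponents collapse exactly to the clean form in \eqref{S_delta-z}, uniformly in the shift parameter $\delta$ over $0\le\delta\le1/\varepsilon$ and across the full ranges $M,N\ge1$ and $H\le N^{\gamma-1}M^\beta$, is the delicate part; the routine derivative estimation set up in the first paragraph is what guarantees that each application of the exponent pair is legitimate.
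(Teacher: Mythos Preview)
The paper does not prove this lemma at all: it is introduced as ``\cite[Proposition~3.1]{LWY-1}'' and followed only by the one-line remark that ``this lemma is actually a direct corollary'' of that proposition. The bound is imported wholesale as a black box and is only \emph{applied} later, with $\alpha=\beta=\gamma=1$ and $(\kappa,\lambda)=(\tfrac12,\tfrac12)$, inside the proof of Proposition~\ref{proposition}. There is thus no argument in the present paper to set your sketch against.

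Your outline is in the right spirit for how such bilinear estimates are obtained in the source: fuse $(h,n)$ into a single variable of length $HN$ carrying the weight $a_{h,n}$, view $S_\delta$ as a bilinear form against $b_m$, decouple by Cauchy--Schwarz, and bring the exponent pair $(\kappa,\lambda)$ to bear on the freed sum. Your diagnosis of the main term --- $M$ is the exponent-pair variable because it alone carries the index $\lambda$, while $H$ and $N$ enter symmetrically with exponent $2+\kappa$ because they are fused --- is correct, and the diagonal and small-phase contributions do account for the three secondary terms $HM^{1/2}N$, $H^{1/2}MN^{1/2}$ and $X^{-1/2}HMN$. One point to tighten: a single Cauchy--Schwarz over $m$ followed by a direct exponent-pair bound on the inner $m$-sum, with the $(h_1,n_1,h_2,n_2)$ variables estimated trivially, yields a main term of shape $X^{\kappa/2}(HN)M^{(1+\lambda-\kappa)/2}$, not the stated form with denominator $2+2\kappa$; the sharper exponent genuinely requires the extra Weyl shift with optimised length that you mention only in passing, and the $(h,n)$-variables cannot themselves be Weyl-differenced since the weights $a_{h,n}$ are arbitrary. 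For the purposes of the paper under review, though, none of this machinery has to be reproduced --- citing \cite{LWY-1} is exactly what the authors do.
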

Note that this lemma is actually a direct corollary of  \cite [Proposition  3.1] {LWY-1}.

\vskip 5mm

\section{A key estimate}

Now, we define
$$
\mathfrak{S}_{\delta}^{f}(x,D):=\sum_{D<d\leqslant 2D}\frac{f(d)}{d}\psi\left(\frac{x}{d+\delta}\right).
$$
Denote by $\mu(n)$ the M\"{o}bius function and by $\Omega(n)$ the number of all prime factors of $n$, respectively.
Using the relations:
$$
\frac{\sigma(n)}{n}=\sum_{d|n}\frac{1}{d},\quad
\frac{\varphi(n)}{n}=\sum_{d|n}\frac{\mu(d)}{d},\quad
\frac{\beta(n)}{n}=\sum_{d|n}\frac{(-1)^{\Omega(d)}}{d},
\quad \frac{\Psi(n)}{n}=\sum_{d|n}\frac{\mu^{2}(d)}{d},
$$
we  decompose $\mathfrak{S}_{\delta}^{f}(x,D)$ into bilinear form
\begin{equation*}
\begin{split}
&\mathfrak{S}_{\delta}^{\sigma}(x,D)=\sum_{D<dl\leqslant 2D}\frac{1}{l}\psi\left(\frac{x}{dl+\delta}\right),\\
&\mathfrak{S}_{\delta}^{\varphi}(x,D)=\sum_{D<dl\leqslant 2D}\frac{\mu(l)}{l}\psi\left(\frac{x}{dl+\delta}\right),\\
&\mathfrak{S}_{\delta}^{\beta}(x,D)=\sum_{D<dl\leqslant 2D}\frac{(-1)^{\Omega(l)}}{l}\psi\left(\frac{x}{dl+\delta}\right)\\
&\mathfrak{S}_{\delta}^{\Psi}(x,D)=\sum_{D<dl\leqslant 2D}\frac{\mu^{2}(l)}{l}\psi\left(\frac{x}{dl+\delta}\right).
\end{split}
\end{equation*}
The following estimation of $\mathfrak{S}_{\delta}^{f}(x,D)$ plays a key role in the proof of the main theorem.

\begin{proposition}\label{proposition}
Let $f\in \{\sigma,\varphi,\beta,\Psi\}$.
For any $\varepsilon>0$ and $0\leqslant \delta\leqslant \varepsilon^{-1}$, we have
\begin{equation}\label{key estimate}
\mathfrak{S}_{\delta}^{f}(x,D)\ll_{\varepsilon} (x^{2}D)^{1/12}x^{\varepsilon}
\end{equation}
uniformly for $x\geqslant 3$ and  $1\leqslant D\leqslant x^{8/11}$, where the  implied constant depends on $\varepsilon$ only.
\end{proposition}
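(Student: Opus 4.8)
The plan rests on the four bilinear identities displayed just above the statement, which reduce everything to estimating the single shape
\[
\mathfrak{S}_{\delta}^{f}(x,D)=\sum_{D<dl\le 2D}\frac{c(l)}{l}\,\psi\!\left(\frac{x}{dl+\delta}\right),\qquad |c(l)|\le 1 .
\]
First I would cut the two variables into dyadic blocks $d\sim N$, $l\sim L$ with $LN\asymp D$ (only $O((\log x)^{2})$ of them) and detach the coupling condition $D<dl\le 2D$: since $(D/(dl))^{s}=D^{s}d^{-s}l^{-s}$ separates the variables, a truncated Perron step (or an elementary partial summation over the product $dl$) removes it at a cost of $x^{\varepsilon}$, the surviving factors $d^{-s}$ and $l^{-s}$ being harmless because Lemma~\ref{lem:2} permits arbitrary bounded coefficients $a_{h,n}$ (in $h,d$) and $b_{m}$ (in $l$).

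Next I would apply Vaaler's approximation (Lemma~\ref{lem:1}) with a parameter $H$. Since $|\Phi(h/(H+1))/h|\ll 1/H$, the main term is $\ll H^{-1}\sum_{1\le|h|\le H}\bigl|\sum_{D<dl\le 2D}(c(l)/l)\,e(hx/(dl+\delta))\bigr|$, and the remainder contributes $\ll D/H+H^{-1}\sum_{1\le|h|\le H}|\cdots|$ by the Fejér‑kernel bound \eqref{RH} (the frequency $h=0$ of the kernel producing the $D/H$). Inside a dyadic block and for $h\sim H'$, the inner exponential sum — after inserting unimodular coefficients $u_{h}$ to restore the absolute values and rescaling $c(l)/l$ by $L$ — is exactly $L^{-1}S_{\delta}(H',L,N)$ in the notation of \eqref{S_delta} with $\alpha=\beta=\gamma=1$ and $X\asymp xH'/D$; one uses the assignment $m\leftrightarrow l$, $n\leftrightarrow d$ when $L\ge N$ and $m\leftrightarrow d$, $n\leftrightarrow l$ when $N>L$, so that the admissibility hypothesis $H'\le N^{\gamma-1}M^{\beta}$ of Lemma~\ref{lem:2} holds. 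Feeding in the classical exponent pair $(\kappa,\lambda)=(\tfrac12,\tfrac12)$, the leading term of \eqref{S_delta-z} is the one responsible for the final exponent — it becomes exactly $(x^{2}D)^{1/12}x^{\varepsilon}$ in the balanced regime $L\asymp N\asymp D^{1/2}$ — while the three remaining terms of \eqref{S_delta-z} and the loss $D/H$ are dominated by $(x^{2}D)^{1/12}x^{\varepsilon}$ once $H$ is chosen optimally and $D\le x^{8/11}$. Summing over the $O(\log x)$ dyadic ranges of $h$ and the $O((\log x)^{2})$ blocks then yields \eqref{key estimate}.

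The main obstacle is the bilinear exponential‑sum estimate itself: one has to balance the gain coming from \eqref{S_delta-z} against the $D/H$ loss of the Vaaler remainder by the right choice of $H$, keep this choice compatible with the condition $H'\le N^{\gamma-1}M^{\beta}$ throughout, and verify uniformity over all dyadic factorisations $LN\asymp D$ (the unbalanced splittings being where one must be most careful) — it is the simultaneous collapse of this balance and of the said compatibility that pins down the exponent $8/11$. Carrying a general exponent pair $(\kappa,\lambda)$ through the optimisation, rather than committing to $(\tfrac12,\tfrac12)$ at the outset, is the natural place to look for a lower exponent or a wider admissible range.
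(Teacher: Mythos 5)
Your plan follows the paper's proof essentially step for step: the same bilinear identities, Vaaler's lemma with a parameter $H$, dyadic decomposition, and Lemma~\ref{lem:2} with $\alpha=\beta=\gamma=1$, $X\asymp xH'/D$ and $(\kappa,\lambda)=(\tfrac12,\tfrac12)$, followed by optimisation in $H$. The difficulty is exactly the point you flag yourself, the unbalanced factorisations, and there your sketch does not go through as written. Consider the block $l\sim L$, $d\sim N$ with $L\asymp 1$, $N\asymp D$, where the weight $c(l)/l$ yields only the prefactor $1/L\asymp 1$ instead of $D^{-1/2}$. Your assignment is then forced to take the $m$-variable of Lemma~\ref{lem:2} to be $d$ and the $n$-variable to be $l$ (the other choice violates $H'\leqslant M$), and after dividing by the normalisation $H'L\asymp H'$ the first term of \eqref{S_delta-z} gives $\big((xH'/D)^{1/2}H'^{5/2}D^{2}\big)^{1/3}/H'\asymp x^{1/6}D^{1/2}$, while even the harmless second term $HM^{1/2}N$ gives $D^{1/2}$, independently of $H$. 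Both exceed the target $(x^{2}D)^{1/12}=x^{1/6}D^{1/12}$ (the second as soon as $D>x^{2/5}$), well inside the claimed range $D\leqslant x^{8/11}$, so no choice of $H$ repairs these blocks. Your zero-frequency accounting is also too crude: bounding the Fej\'er term by $D/H$ forces $H\geqslant D^{11/12}x^{-1/6}$, which is incompatible with the admissibility requirement $H'\leqslant M\asymp D^{1/2}$ in the balanced blocks once $D>x^{2/5}$.

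For comparison, the paper's argument only ever treats blocks in which the variable carrying the weight $1/m$ has size $M\geqslant D^{1/2}$: it is precisely this weight that produces the saving $1/M\leqslant D^{-1/2}$ turning the leading term into $(xN^{4}M^{-3})^{1/6}\leqslant(x^{2}D)^{1/12}$ and the zero-frequency term into $N/H\leqslant D^{1/2}/H$, and the complementary blocks are dismissed there with the parenthetical claim that one may assume $M\geqslant D^{1/2}$ ``by symmetry''. But the two variables are not symmetric: the weight sits on one of them and is the sole source of the saving, so those blocks (for instance the $l=1$ part of the $\sigma$-case, $\sum_{D<d\leqslant 2D}\psi\big(x/(d+\delta)\big)$, for which Lemma~\ref{lem:2} yields nothing as strong as $(x^{2}D)^{1/12}$ near $D\asymp x^{1/2}$) require a separate argument that neither your sketch nor the quoted reduction supplies. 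This is the genuine gap: a treatment of the dyadic blocks in which the weighted variable is below $D^{1/2}$; carrying a general exponent pair through the optimisation, as you suggest at the end, does not remove this obstruction.
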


\begin{proof}

We shall only bound $\mathfrak{S}_{\delta}^{\varphi}(x,D)$. The other cases 
can be treated in the same way.
Using Lemma \ref{lem:1} and splitting the interval of summation into the dyadic intervals, we can write
\begin{equation}\label{S_b+S_+}
\mathfrak{S}_{\delta}^{\varphi}(x,D)=-\frac{1}{2\pi i}\sum_{H{'}}\sum_{M}\sum_{N}(\mathfrak{S}_{\delta,\flat}^{\varphi}(H{'},M,N)+\overline{\mathfrak{S}_{\delta,\flat}^{\varphi}(H{'},M,N)})
+\sum_{M}\sum_{N}\mathfrak{S}_{\delta,\dag}^{\varphi}(M,N),
\end{equation}
where $H\leqslant D, MN\asymp D, M\geqslant D^{1/2}, a_{h}:=\frac{H{'}}{h}\Phi(\frac{h}{H+1})\ll 1$
and
\begin{equation*}
\mathfrak{S}_{\delta,\flat}^{\varphi}(H{'},M,N)=\frac{1}{H{'}}\sum_{h\sim H^{'}}a_{h}\mathop{{\sum}\,\,{\sum}}_{\substack{m\sim M   n\sim N\\ D<mn\le 2D}}\frac{\mu(m)}{m}e\left(\frac{hx}{mn+\delta}\right),
\end{equation*}
\begin{equation*}
\mathfrak{S}_{\delta,\dag}^{\varphi}(M,N)=\mathop{{\sum}\,\,{\sum}}_{\substack{m\sim M n\sim N\\ D<mn\le 2D}} \frac{\mu(m)}{m}R_{H}\left(\frac{x}{mn+\delta}\right).
\end{equation*}
(Here we can suppose $M\geqslant D^{1/2}$ in view of the symmetry of the variables $m$ and $n$.)

Firstly, we bound $\mathfrak{S}_{\delta,\flat}^{\varphi}(H{'},M,N)$.
We remove the extra multiplicative condition $D<mn\leqslant 2D$
 at the cost of a factor $\log x$, and then apply  \eqref{S_delta-z} with $\alpha=\beta=\gamma=1, (X,H,M,N)=(xH{'}/MN,H{'},M,N)$  and $(\kappa,\lambda)=(\frac{1}{2},\frac{1}{2})$  to get
\begin{equation*}
\begin{split}
\mathfrak{S}_{\delta,\flat}^{\varphi}(H{'},M,N)&\ll
\frac{x^{\varepsilon}}{H{'}}\sum_{m\sim M}\frac{1}{m}\sum_{h\sim H{'}}\sum_{n\sim N}e\left(\frac{hx}{mn+\delta}\right)\\
&\ll
\frac{x^{\varepsilon}}{H{'}M}\sum_{m\sim M}\sum_{h\sim H{'}}\sum_{n\sim N}e\left(\frac{hx}{mn+\delta}\right)\\
&\ll
\left((xN^{4}M^{-3})^{1/6}+NM^{-1/2}+(H{'}^{-1}N)^{1/2}(x^{-1}H{'}^{-1}MN^{3})^{1/2}\right){x^{\varepsilon}}\\
&\ll \left((x^{2}D)^{{1}/{12}}+D^{{1}/{4}}+(DH{'}^{-{2}})^{{1}/{4}}+(x^{-1}D^{2})^{{1}/{2}}\right)x^{\varepsilon}
\end{split}
\end{equation*}
where we used $1\leqslant H' \leqslant H\leqslant M,  MN\asymp D$ and $M\geqslant D^{1/2}$.
Noticing that $D\leqslant x^{{8}/{11}}$
we have
\begin{equation}\label{S_b}
\mathfrak{S}_{\delta,\flat}^{\varphi}(H{'},M,N)
\ll (x^{2}D)^{1/12}x^{\varepsilon}.
\end{equation}

Secondly, we bound $\mathfrak{S}_{\delta,\dag}^{\varphi}(M,N)$.
Using \eqref{RH} of Lemma \ref{lem:1}, we have
\begin{equation*}
\begin{split}
|\mathfrak{S}_{\delta,\dag}^{\varphi}(M,N)|&\leqslant \sum_{m\sim M}\frac{1}{m}\sum_{n\sim N}\left|R_{H}\left(\frac{x}{mn+\delta}\right)\right|\\
&\leqslant \frac{1}{2H+2}\sum_{0\leqslant |h|\leqslant H}\left(1-\frac{|h|}{H+1}\right)\sum_{m\sim M}\frac{1}{m}\sum_{n\sim N}e\left(\frac{hx}{mn+\delta}\right)\\
&\ll
\frac{1}{HM}\left(MN+\max_{1\leqslant H{'}\leqslant H}\left|\widetilde{\mathfrak{S}}_{\delta,\dag}^{\varphi}(H{'},M,N)\right|\right),
\end{split}
\end{equation*}
where $$\widetilde{\mathfrak{S}}_{\delta,\dag}^{\varphi}(H{'},M,N)=\sum_{h\sim H{'}}\sum_{m\sim M}\sum_{n\sim N}\left(1-\frac{|h|}{H+1}\right)e\left(\frac{hx}{mn+\delta}\right).$$
Clearly we can bound $\widetilde{\mathfrak{S}}_{\delta,\dag}^{\varphi}(H{'},M,N)$ in the same way as $\mathfrak{S}_{\delta,\flat}^{\varphi}(H{'},M,N)$
 and to get
\begin{equation}\label{S_+}
\mathfrak{S}_{\delta,\dag}^{\varphi}(M,N)\ll (D^{{1}/{2}}H^{-1}+(x^{2}D)^{1/12})x^{\varepsilon}
\end{equation}
for $H\leqslant M\leqslant D$ and $MN\asymp D\leqslant x^{8/11}$.

Inserting \eqref{S_b} and \eqref{S_+} into \eqref{S_b+S_+} and taking $H=D^{{1}/{2}}$, we  can obtain the  required inequality.

\end{proof}

\vskip 5mm

\section{Proof of Theorem \ref{Thm1} }

In this section, let $f\in\{\sigma, \varphi, \beta, \Psi\}$
and
let $N_{f}\in [x^{\frac{3}{11}},x^{\frac{1}{3}})$ be a  parameter to be chosen later.
First we write
\begin{equation}\label{S3+S4}
\sum_{n\leqslant x}\frac{f([x/n])}{[x/n]}=S_{f}^{\dag}(x)+S_{f}^{\sharp}(x)
\end{equation}
with
\begin{equation*}
S_{f}^{\dag}(x)=\sum_{n\leqslant N_{f}}\frac{f([x/n])}{[x/n]},\qquad
S_{f}^{\sharp}(x)=\sum_{N_{f}<n \leqslant x}\frac{f([x/n])}{[x/n]}.
\end{equation*}

A. \textit{Bound of $S_{f}^{\dag}(x)$}

\vskip 1mm
We have
$f(n)\ll_{\varepsilon} n^{1+\varepsilon}$ for all $n\ge 1$ and any $\varepsilon>0$, thus
\begin{equation}\label{S3}
S_{f}^{\dag}(x)
\ll N_{f}x^{\varepsilon}.
\end{equation}

\vskip 1mm

B. \textit{Bound of $S_{f}^{\sharp}(x)$}

\vskip 1mm

In order to get the bound of $S_{f}^{\sharp}(x)$, we put $m=[x/n]$. Then$$
x/n-1<m\leqslant x/n \quad\Leftrightarrow \quad x/(m+1)<n\leqslant x/m.
$$
Thus
\begin{equation}\label{S4fenjie}
\begin{split}
S_{f}^{\sharp}(x)
&=\sum_{N_{f}<n\leqslant x}\frac{f([x/n])}{[x/n]}=\sum_{m\leqslant x/N_{f}}\frac{f(m)}{m}\sum_{x/(m+1)<n\leqslant x/m}1\\
&=\sum_{m\leqslant x/N_{f}}\frac{f(m)}{m}\left(\frac{x}{m}-\psi\left(\frac{x}{m}\right)-\frac{x}{m+1}+\psi\left(\frac{x}{m+1}\right)\right)\\
&=x\sum_{m\geqslant{} 1}\frac{f(m)}{m^{2}(m+1)}+ O(N_{f}x^{\varepsilon})+{R}_{0}^{f}(x)-{R}_{1}^{f}(x),
\end{split}
\end{equation}
where we have used the following bounds
\begin{equation*}
\begin{split}
&x\sum_{m>{x}/{N_{f}}}\frac{f(m)}{m^{2}(m+1)}\ll_{\varepsilon} x\sum_{m>{x}/{N_{f}}}\frac{m^{1+\varepsilon}}{m^{2}(m+1)}\ll N_{f}x^{\varepsilon},\\
&\sum_{m\leqslant N_{f}}\frac{f(m)}{m}\psi \left(\frac{x}{m+\delta}\right)\ll_{\varepsilon} \sum_{m\leqslant N}\frac{f(m)}{m}\ll N_{f}x^{\varepsilon}
\end{split}
\end{equation*}
and
\begin{equation*}
R_{\delta}^{f}(x)=\sum_{N_{f}<m\leqslant{} {x}/{N}_{f}}\frac{f(m)}{m}\psi \left(\frac{x}{m+\delta}\right).
\end{equation*}
Let ~$D_j:={x}/({2^jN_{f}}),$ we have $N_{f}\leqslant D_{j}\leqslant x/N_{f} \leqslant x^{{8}/{11}}$ for $0\leqslant j 
\leqslant{\log(x/N_{f}^{2})}/{\log 2}$.
Thus we can apply \eqref{key estimate} of Proposition \ref{proposition} to get
\begin{equation}\label{R_delta-f}
\begin{split}
R_{\delta}^{f}(x) &\leqslant \sum_{0\leqslant j \leqslant{\log(x/N_{f}^{2})}/{\log 2}}|\mathfrak{S}_{\delta}^{f}(x,D_{j})|\\
&\ll \sum_{0\leqslant j \leqslant{\log(x/N_{f}^{2})}/{\log 2}}\left(x^{2}D_{j}\right)^{1/12}x^{\varepsilon}\\
&\ll
(x^{3}N_{f}^{-1})^{1/12}x^{\varepsilon}.
\end{split}
\end{equation}
Inserting this into \eqref{S4fenjie} and taking $N_{f}=x^{3/13}$, we derive
\begin{equation}\label{S4}
S_{f}^{\sharp}(x)=x\sum_{m\geqslant{} 1}\frac{f(m)}{m^{2}(m+1)}+O(x^{3/13+\varepsilon}).
\end{equation}
Inserting \eqref{S3} with $N_{f}=x^{3/13}$ and \eqref{S4} into \eqref{S3+S4},
we get the required result.

\vskip 5mm

\noindent{\bf Acknowledgements}.
The authors are grateful for Professor  Jie Wu{’}s instructive talk and discussion.

\vskip 8mm


\begin{thebibliography}{150}


\bibitem{O.Bordelles2020}
O. Bordell\`es,
\emph{On certain sums of number theory},
arXiv:2009.05751v2 [math.NT] 25 Nov 2020.


\bibitem{BDHPS2019}
O. Bordell\`es, L. Dai, R. Heyman, H. Pan and I. E. Shparlinski,
\emph{On a sum involving the Euler function},
J. Number Theory {\bf 202} (2019), 278--297.





\bibitem{Bourgain}
J. Bourgain and N. Watt,
\emph{Mean square of zeta function, circle problem and divisor problem revisited},
arXiv:1709.04340v1 [math.AP] 13 Sep 2017.



\bibitem{GrahamKolesnik1991}
S. W. Graham and G. Kolesnik,
\emph{Van Der Corput's method of exponential sums},
Cambridge University Press,
Cambridge, 1991.


\bibitem{LWY-1}
K. Liu, J. Wu and Z.-S. Yang,
\emph{  A variant of the prime number theorem},
arXiv:2105.10844v1 [math.NT] 23 May 2021.

\bibitem{LWY-2+}
K. Liu, J. Wu and Z.-S. Yang,
\emph{On some sums involving the integral part  function},
arXiv:2109.01382v1 [math.NT] 3 Sep 2021.




\bibitem{MaWuzhao}
J. Ma , J. Wu and F. Zhao,
\emph{On a generalisation of Bordell\`es-Dai-Heyman-Pan-Shparlinski{'} conjecture },
to appear in J. Number Theory.

\bibitem{MaWu}
J. Ma and J. Wu,
\emph{On a sum involving the Mangoldt function},
 Period. Math. Hung. {\bf 83} (2021), 39--48.




\bibitem{MaSun}
J. Ma and H.-Y. Sun,
\emph{On a sum involving the divisor function},
to appear in Period. Math. Hung.

\bibitem{Stucky}
J. Stucky,
\emph{The fractional sum of small arithmetic function},
arXiv:2106.14142v1 [math. NT] 27 Jun 2021.


\bibitem{Wu2020note}
J. Wu,
\emph{Note on a paper by Bordell\`es, Dai, Heyman, Pan and Shparlinski},
Period. Math. Hung. {\bf 80} (2020), 95--102.




\bibitem{Zhai2020}
W. Zhai,
\emph{On a sum involving the Euler function},
J. Number Theory {\bf 211} (2020), 199--219.


\bibitem{ZW}
F. Zhao and  J. Wu,
\emph{On the  sum involving the sum-of-divisors function},
J. of Math.,
Volume 2021.



\end{thebibliography}
\end{document}